\newcommand{\beq}{\begin{eqnarray}}
\newcommand{\eeq}{\end{eqnarray}}
\newcommand{\bq}{\begin{equation}}
\newcommand{\eq}{\end{equation}}
\newcommand{\beqn}{\begin{eqnarray*}}
\newcommand{\eeqn}{\end{eqnarray*}}
\newcommand{\vertiii}[1]{{\vert\kern-0.25ex\vert\kern-0.25ex\vert #1
    \vert\kern-0.25ex\vert\kern-0.25ex\vert}}
\newcommand{\ignore}[1]{}
\newtheorem{definition}{Definition}[section]
\newtheorem{proposition}{Proposition}[section]
\newtheorem{theorem}{Theorem}[section]
\newtheorem{remarks}{Remarks}[section]
\newtheorem{lemma}{Lemma}[section]
\numberwithin{equation}{section}
\title[Mixed generalized fractional Brownian motion]{Mixed generalized fractional Brownian motion}
\author[S. Alajmi, E. Mliki]{Shaykhah  Alajmi$^{1}$ and Ezzedine Mliki$^{1, 2}$}
\address{$^{1}$Department of Mathematics, College of Science, Imam Abdulrahman Bin Faisal University, P. O. Box 1982, Dammam, Saudi Arabia.
$^{2}$Basic and Applied Scientific Research Center, Imam Abdulrahman Bin Faisal University, P.O. Box 1982, Dammam, 31441, Saudi Arabia. }
\email{\sl sho192010@hotmail.com}
\email{\sl ermliki@iau.edu.sa}
\begin{document}
\begin{abstract} To extend several known centered Gaussian processes,  we introduce a new centred mixed self-similar Gaussian process called the mixed generalized fractional Brownian motion, which could serve as a good model for a larger class of natural phenomena. This process generalizes both the  well-known  mixed fractional Brownian motion introduced  by  Cheridito \cite{Cherid} and the generalized fractional Brownian motion introduced by Zili  \cite{Zili}. We study its main stochastic properties, its non-Markovian and non-stationarity characteristics and the conditions under which it is not a semimartingale. We prove  the  long range dependence properties of this process.
\end{abstract}


\subjclass[2010]{60G15, 60G17, 60G18, 60G20}
\keywords{ Mixed fractional Brownian motion, generalized fractional Brownian motion, long-range dependence, stationnarity, Markovity, semimartingale}



\maketitle

\section{Introduction}

 Fractional Brownian motion on the whole real line (fBm  for short)  $B^{H}=\{B_{t}^{H},\, t\in\mathbb{R}\}$ of Hurst parameter $H$ is the best known centered Gaussian process with long-range dependence. Its covariance function is
\begin{eqnarray}\label{C1}
	\mbox{Cov}(B_{t}^{H},\,B_{s}^{H})= \frac{1}{2} [|t|^{2H}+|s|^{2H}- |t-s|^{2H}]
\end{eqnarray}
where $H$ is a real number in $(0, 1)$ and the case $H=\frac{1}{2}$ corresponds to the Brownian motion. It is the unique continuous Gaussian process starting from zero, the self-similarity and stationarity  of the increments are two main properties for which fBm enjoyed successes as modeling tool in finance and telecommunications.  Researchers have applied  fractional Brownian motion  to a wide range of problems, such as bacterial colonies, geophysical data, electrochemical deposition, particle diffusion, DNA sequences and stock market indicators \cite{AlexY} and \cite{Omer}. In particular, computer science applications of fBm include modeling network traffic and generating
graphical landscapes \cite{Nan} and  \cite{TWS}. The fBm was investigated in many papers (e.g. \cite{ AlexYES, Alo, GaoH, Dhhd, ManVa}). The main difference between fBm and regular Brownian motion is that the increments in Brownian motion are independent, increments for fBm are not.

In \cite{Bojd}, the authors suggested another kind of extension of the Brownian motion, called the sub-fractional
Brownian motion (sfBm for short), which preserves most properties of the fBm, but not the stationarity of the increments. It is a centered Gaussian process  $\xi^H=\left\lbrace \xi_t^H, \ t\geq 0\right\rbrace,$ defined  by:
\begin{eqnarray}
	\xi_t^H=\frac{B_t^H+B_{-t}^H}{\sqrt{2}}, \quad t\geq 0,
\end{eqnarray}
where $H\in(0,1)$. The case $H=\frac{1}{2}$ corresponds to the Brownian motion.

The sfBm is intermediate between Brownian motion and fractional Brownian motion
in the sense that it has properties analogous to those of fBm, self-similarity,  not Markovian but  the increments
on nonoverlapping intervals are more weakly correlated, and their covariance decays
polynomially at a higher rate in comparison with fBm (for this reason in  \cite{Bojd} is called sfBm). So the sfBm does not generalize the fBm.
The sfBm was investigated in many papers (e.g. \cite{BardBa, Bojd, Sgh, Tud}).

An extension  of the sfBm was introduced by Zili in \cite{ZiliMS} as
a linear combination of a finite number of independent sub-fractional Brownian motions.
It was called the mixed sub-fractional Brownian motion (msfBm for short). The msfBm
is a centered mixed self-similar Gaussian process and does not have stationary increments.
The msfBm do not generalize the fBm.

In \cite{Zili} Zili  introduced new model called the generalized  fractional Brownian motion (gfBm for short) which is an extension of both sub-fractional Brownian motion and fractional Brownian motion.  A gfBm with parameters $a, b,
	\ and\ H$, is a process $ Z^H=\left\lbrace Z_t^H(a,b), t\geq0\right\rbrace $
	defined by
	\begin{eqnarray}
		 Z_t^H(a,b)=aB_t^H+bB_{-t}^H, \quad t\geq0
	\end{eqnarray}
The gfBm was investigated in \cite{ZiliM} and  \cite{ElNot}. The gfBm generalize the sfBm but not the the  mixed fractional Brownian motion.

The  mixed fractional Brownian motion (mfBm for short)  is a linear combination between a Brownian motion and an independent fractional Brownian motion of Hurst parameter $H$. It was introduced by Cheridito \cite{Cherid} to present a stochastic model of
the discounted stock price in some arbitrage-free and complete financial markets. The mfBm is a centered Gaussian process starting from
zero with covariance function
\begin{eqnarray}
	\mbox{Cov}(N_t^H(a, b), N_s^H(a, b))=a^{2}(t\wedge s)+\frac{b^{2}}{2}\left(t^{2H}+s^{2H}-|t-s|^{2H}\right),
\end{eqnarray}
with $H\in(0,1).$ When $a=1$ and $b=0,$ the mfBm is the Brownian motion and when $a=0$ and $b=1,$  is the fBm.
 We refer also to \cite{ElNo, Cherid, ZiliMM, Thale} for further information on this process.

In this paper, we introduce a new stochastic model, which we call the mixed generalized fractional Brownian motion.

\begin{definition} A mixed generalized fractional Brownian motion (mgfBm for short) of parameters $a, b, c$ and $H\in (0, \, 1)$ is a centred
Gaussian  process $M^{H}(a, b, c)=\{ M_{t}^{H}(a, b, c),\, t\geq 0\},$ defined on a probability space $(\Omega, \mathcal{F}, \mathbb{P}),$  with the covariance function

\begin{eqnarray}\label{C2}
C(t,s) =a^2(t\wedge s)+\frac{(b+c)^2}{2}(t^{2H}+s^{2H})-bc(t+s)^{2H}-\frac{(b^2+c^2)}{2}|t-s|^{2H}
 \end{eqnarray}
 where $t\wedge s=\frac{1}{2}\left( t+s-|t-s|\right)$.
\end{definition}

The mgfBm is completely different from all the extensions mentioned above. The  process $M^{H}(a, b, c) $ is motivated by the fact that  this process already introduced for specific values of $a$, $b$ and $c$.  Indeed  $M^{H}(a, b, 0) $ is the mixed  fractional Brownian motion and $M^{H}(0, b, c), $ is the generalized fractional Brownian motion. This why we will name $M^{H}(a, b, c) $ the mixed generalized fractional Brownian motion. It allows to deal with a larger class of modeled natural phenomena, including those
with stationary or non-stationary increments.

Our goal is to study the main stochastic properties of this new model, paying attention to the long-range dependence,  self-similarity,
increment stationary, Markovity and  semi-martingale properties.

\section{The main  properties}
Existence of the mixed generalized fractional Brownian motion  $M^{H}(a, b, c)$ for any $H\in (0, 1)$ can be shown in the following way, consider the process
\begin{eqnarray}\label{MH}
M_{t}^{H}(a, b, c) =aB_{t}+bB_{t}^{H}+cB_{-t}^{H},\quad t\geq0,
\end{eqnarray}
where $B=\{B_{t}, \,t\in\mathbb{R} \}$ is a Brownian motion and $B^{H}=\{B_{t}^{H},\, t\in\mathbb{R} \}$ is an independent fractional Brownian
motion with Hurst parameter $H\in(0, 1).$  \\

Using (\ref{C1}) and since  $B$ and $B^{H}$  are independent we obtain the following lemma.

\begin{lemma}
For all $s,\  t\geq0$, the process (\ref{MH})  is a centered Gaussian process with covariance function given by (\ref{C2}).
	\end{lemma}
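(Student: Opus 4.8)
The plan is to read off the claim directly from the explicit representation (\ref{MH}), using only the independence of $B$ and $B^H$ together with the fBm covariance formula (\ref{C1}). First I would settle the two structural claims. For each finite collection of times, $\sum_i \lambda_i M_{t_i}^H(a,b,c)$ is a finite linear combination of the variables $\{B_u\}_{u\in\mathbb{R}}\cup\{B_v^H\}_{v\in\mathbb{R}}$; since $B$ and $B^H$ are independent Gaussian processes they are jointly Gaussian, so any such linear combination is Gaussian, whence $M^H(a,b,c)$ is a Gaussian process. Centering is immediate: $B$ and $B^H$ are centered, so $\mathbb{E}[M_t^H(a,b,c)]=a\,\mathbb{E}[B_t]+b\,\mathbb{E}[B_t^H]+c\,\mathbb{E}[B_{-t}^H]=0$.

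The substance is the covariance computation. By bilinearity and independence, all mixed terms pairing one factor from $B$ with one from $B^H$ vanish, leaving
\[
\text{Cov}(M_t^H,M_s^H)=a^2\,\text{Cov}(B_t,B_s)+\text{Cov}\bigl(bB_t^H+cB_{-t}^H,\ bB_s^H+cB_{-s}^H\bigr).
\]
For $t,s\ge 0$ the Brownian part is $a^2(t\wedge s)$. I would then expand the fractional part into the four terms
\[
b^2\,\text{Cov}(B_t^H,B_s^H)+bc\,\text{Cov}(B_t^H,B_{-s}^H)+bc\,\text{Cov}(B_{-t}^H,B_s^H)+c^2\,\text{Cov}(B_{-t}^H,B_{-s}^H),
\]
and evaluate each using (\ref{C1}). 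The one point demanding care is the handling of the reflected arguments: because $t,s\ge 0$ one has $|{-t}|=t$, $|{-s}|=s$, $|t-(-s)|=t+s$, and $|{-t}-s|=t+s$, which is precisely where the term $(t+s)^{2H}$ is generated.

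Collecting the pieces, the two diagonal terms ($b^2$ and $c^2$) together give $\tfrac{b^2+c^2}{2}\bigl(t^{2H}+s^{2H}-|t-s|^{2H}\bigr)$, while the two cross terms give $bc\bigl(t^{2H}+s^{2H}-(t+s)^{2H}\bigr)$. Adding the Brownian contribution and regrouping the coefficient of $t^{2H}+s^{2H}$ via the elementary identity $\tfrac{b^2+c^2}{2}+bc=\tfrac{(b+c)^2}{2}$ reproduces (\ref{C2}) exactly. I do not anticipate any genuine obstacle: this is a direct, if slightly bookkeeping-heavy, computation, and the only step where an error could creep in is the evaluation of the absolute values at the reflected times $-t$ and $-s$, which is what simultaneously produces the $(t+s)^{2H}$ term and the clean square $(b+c)^2$.
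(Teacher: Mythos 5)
Your proposal is correct and follows essentially the same route as the paper: expand the covariance by bilinearity, kill the cross terms between $B$ and $B^H$ by independence, evaluate the four fractional terms via (\ref{C1}) with the reflected arguments giving the $(t+s)^{2H}$ contribution, and regroup using $\tfrac{b^2+c^2}{2}+bc=\tfrac{(b+c)^2}{2}$. Your explicit justification of joint Gaussianity and centering is a minor addition the paper leaves implicit, but there is no substantive difference.
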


	\begin{proof}
	  Let $s,\  t\geq0$ and $C(t, s)= Cov\left( M_t^{H}(a, b, c), M_s^{H}(a, b, c)\right).$ Then
		\begin{eqnarray*}
			C(t, s)&=&Cov\left[ \left(aB_{t}+bB_{t}^{H}+cB_{-t}^{H}\right),\left( aB_{s}+bB_{s}^{H}+cB_{-s}^{H} \right)\right]\\
			&=&a^2(t\wedge s)+b^2Cov(B_t^H,B_s^H)+bc[Cov(B_t^H,B_{-s}^H)]+
			cb[Cov(B_{-t}^H,B_s^H)]\\&+&c^2Cov(B_{-t}^H,B_{-s}^H)\\
			&=&a^2(t\wedge s)+\frac{b^2}{2}\left(t^{2H}+s^{2H}-|t-s|^{2H} \right)+\frac{bc}{2}\left( t^{2H}+s^{2H}-|t+s|^{2H} \right)\\
			&+&\frac{cb}{2}\left( t^{2H}+s^{2H}-|-(t+s)|^{2H} \right)+\frac{c^2}{2}\left( t^{2H}+s^{2H}-|-(t-s)|^{2H} \right)\\
			&=& a^2(t\wedge s)+\frac{b^2}{2}t^{2H}+\frac{b^2}{2}s^{2H}-\frac{b^2}{2}|t-s|^{2H}+\frac{bc}{2}t^{2H}+\frac{bc}{2}s^{2H}\\
			&=& a^2(t\wedge s)+\frac{(b+c)^2}{2}(t^{2H}+s^{2H})-bc|t+s|^{2H}-\frac{(b^2+c^2)}{2}|t-s|^{2H}.
		\end{eqnarray*}

Hence  the covariance function of the  process (\ref{MH})  is precisely $C(t,s)$ given by (\ref{C2}).
Therefore the $M^{H}(a, b, c)$ exists.
\end{proof}

	\begin{remarks}
		Some special cases of the mixed generalized fractional Brownian motion:
	\begin{enumerate}	
  \item If $a=0, b=1, c=0,$ then $M^{H}(0, 1, 0)$ is a fBm.
\item If $a=0, b=c=\frac{1}{\sqrt{2}},$ then $M^{H}(0, \frac{1}{\sqrt{2}}, \frac{1}{\sqrt{2}})$
is a  sfBm.
\item If  $a=1, b=0, c=0,$ then $M^{H}(1, 0, 0)$ is a  Bm.
\item If $a=0,$ then $M^{H}(0, b, c),$ is a  gfBm.
\item If $c=0,$ then $M^{H}(a, b, 0),$  is a mfBm.
\item If $b=c,$ then $M^{H}(a, \frac{b}{\sqrt{2}}, \frac{b}{\sqrt{2}}),$  is the smfBm.
\end{enumerate}
	\end{remarks}
So the mixed generalized fractional Brownian motion is, at the same, a generalization of the fractional Brownian motion,  sub-fractional Brownian motion, the sub-mixed fractional  Brownian motion, generalized fractional Brownian motion,  mixed fractional Brownian motion and of course of the standard Brownian motion.

\begin{proposition}\label{p1}
		The mgfBm   satisfies the following properties:
\begin{enumerate}
\item For all $ t\geq0$,
\begin{eqnarray*}E\left(M_t^H(a,b,c) \right)^2 =a^2t+\left(b^2+c^2-(2^{2H}-2)bc \right)t^{2H} .\end{eqnarray*}
\item Let $0\leq s<t$. Then
	\begin{eqnarray*}
		E \left(M_t^H(a,b,c)-M_s^H(a,b,c)\right)^2 &=&a^2|t-s|-2^{2H}bc(t^{2H}+s^{2H})\\&+&(b^2+c^2)|t-s|^{2H}+2bc|t+s|^{2H}.
	\end{eqnarray*}

\item We have for all   $0\leq s<t$,
	\begin{eqnarray*}
		a^2(t-s)+\gamma_{(b, c, H)}(t-s)^{2H} \leq E \left(M_t^H(a,b,c)-M_s^H(a,b,c)\right)^2 \leq a^2(t-s)+\nu_{(b, c, H)}(t-s)^{2H}
	\end{eqnarray*}
where
\begin{eqnarray*}
\gamma_{(b, c, H)}=(b^2 + c^2-2bc(2^{2H-1}-1))\,\mathbf{1}_{\mathcal{C}}(b,c,H)+(b^2 + c^2)\,\mathbf{1}_{\mathcal{D}}(b,c,H),
	\end{eqnarray*}

\begin{eqnarray*}
\nu_{(b, c, H)}=(b^2 + c^2)\,\mathbf{1}_{\mathcal{C}}(a,b,H)+(b^2 + c^2-2bc(2^{2H-1}-1))\,\mathbf{1}_{\mathcal{D}}(b,c,H),
	\end{eqnarray*}
\begin{eqnarray*}
\mathcal{C}=\{(b,c,H)\in \mathbb{R}^{2}\times]0, 1[;  \;(H>\frac{1}{2}, \;bc\geq0 )\; or \;(H<\frac{1}{2}, \;bc\leq 0)\},
	\end{eqnarray*}
and
\begin{eqnarray*}
\mathcal{D}=\{(b,c,H)\in \mathbb{R}^{2}\times]0, 1[;\;  (H>\frac{1}{2}, \;bc\leq0)\;or\; (H<\frac{1}{2},\;bc\geq 0)\}.
	\end{eqnarray*}
\end{enumerate}

\end{proposition}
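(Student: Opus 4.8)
The plan is to obtain parts (1) and (2) directly from the covariance formula \eqref{C2}, and then to reduce part (3) to a single scalar inequality that carries all of the real content.

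Part (1) follows by setting $s=t$ in \eqref{C2}: the term $|t-s|^{2H}$ vanishes, $(t+s)^{2H}=2^{2H}t^{2H}$, and expanding $(b+c)^2=b^2+2bc+c^2$ collects the $t^{2H}$-coefficient into $b^2+c^2+(2-2^{2H})bc=b^2+c^2-(2^{2H}-2)bc$. Part (2) then follows from the Gaussian identity $E(M_t^H-M_s^H)^2=C(t,t)+C(s,s)-2C(t,s)$: substituting the three instances of \eqref{C2} (with $t\wedge s=s$ and $|t-s|=t-s$), the $a^2$-terms combine to $a^2(t-s)$, the coefficient of $t^{2H}+s^{2H}$ reduces to $-2^{2H}bc$ via the cancellation $\bigl(b^2+c^2-(2^{2H}-2)bc\bigr)-(b+c)^2=-2^{2H}bc$, and the leftover terms give $(b^2+c^2)|t-s|^{2H}+2bc|t+s|^{2H}$. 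Both computations are routine.

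For part (3) I would use part (2) to write, for $0\le s<t$,
\begin{equation*}
E\bigl(M_t^H(a,b,c)-M_s^H(a,b,c)\bigr)^2 - a^2(t-s) = (b^2+c^2)(t-s)^{2H} + bc\,g(t,s),
\end{equation*}
where $g(t,s):=2(t+s)^{2H}-2^{2H}(t^{2H}+s^{2H})$. Since the two candidate coefficients in $\gamma_{(b,c,H)}$ and $\nu_{(b,c,H)}$ are precisely $b^2+c^2$ and $b^2+c^2-(2^{2H}-2)bc$, the proposition amounts to sandwiching $bc\,g(t,s)$ between $0$ and $(2-2^{2H})bc\,(t-s)^{2H}$, with the order of the two endpoints governed by the sign of $bc$ and by whether $H\gtrless\frac{1}{2}$ — exactly the dichotomy encoded in $\mathcal C$ and $\mathcal D$. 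I would prove the two required bounds on $g$ separately. The sign of $g$ is the easy half: convexity of $u\mapsto u^{2H}$ for $H>\frac{1}{2}$ gives $(t+s)^{2H}\le 2^{2H-1}(t^{2H}+s^{2H})$, hence $g\le 0$, while concavity for $H<\frac{1}{2}$ gives $g\ge 0$; paired with the sign of $bc$ this yields, on each of $\mathcal C$ and $\mathcal D$, the bound whose coefficient is $b^2+c^2$.

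The hard half is the comparison of $g(t,s)$ with $(2-2^{2H})(t-s)^{2H}$. By homogeneity it suffices to take $t=1$, $s=x\in[0,1)$ and to show that
\begin{equation*}
\phi(x):=2(1+x)^{2H}-2^{2H}(1+x^{2H})+(2^{2H}-2)(1-x)^{2H}
\end{equation*}
satisfies $\phi\ge 0$ for $H>\frac{1}{2}$ and $\phi\le 0$ for $H<\frac{1}{2}$. The favourable facts are $\phi(0)=\phi(1)=0$ and $\phi'(1)=0$, together with $\phi'(0)=2H(4-2^{2H})>0$ when $H>\frac12$ (since $2<2^{2H}<4$ on $(\frac12,1)$). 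The plan is to show that $\phi'$ changes sign exactly once on $(0,1)$, from positive to negative, so that $\phi$ rises from $0$ and returns to $0$ without ever dipping below it, giving $\phi\ge0$; the case $H<\frac12$ is the mirror image. I expect this unimodality to be the main obstacle, because $\phi''=2H(2H-1)\Psi(x)$ with $\Psi(x)=2(1+x)^{2H-2}-2^{2H}x^{2H-2}+(2^{2H}-2)(1-x)^{2H-2}$ runs from $-\infty$ near $x=0$ to $+\infty$ near $x=1$, so $\phi$ is neither globally convex nor globally concave and the endpoint data alone do not pin down the interior sign; the delicate point is verifying that $\Psi$ (hence $\phi''$, hence $\phi'$) crosses zero only once. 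Once $\phi$ is controlled, substituting back and multiplying by $bc$ while tracking its sign reproduces exactly $\gamma_{(b,c,H)}$ and $\nu_{(b,c,H)}$ on $\mathcal C$ and $\mathcal D$, which completes the proof.
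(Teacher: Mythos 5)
Parts (1) and (2) of your proposal are correct and coincide with what the paper does: set $s=t$ in \eqref{C2}, respectively expand $C(t,t)+C(s,s)-2C(t,s)$; your coefficient bookkeeping (in particular $\bigl(b^2+c^2-(2^{2H}-2)bc\bigr)-(b+c)^2=-2^{2H}bc$) checks out. Your reduction of part (3) to the scalar inequality is also correct: writing the increment variance as $a^2(t-s)+(b^2+c^2)(t-s)^{2H}+bc\,g(t,s)$ with $g(t,s)=2(t+s)^{2H}-2^{2H}(t^{2H}+s^{2H})$, the claim is exactly that $bc\,g$ lies between $0$ and $(2-2^{2H})bc\,(t-s)^{2H}$, and your ``easy half'' (the sign of $g$ via convexity/concavity of $u\mapsto u^{2H}$) is fine.

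The problem is that you never prove the ``hard half.'' You reduce it to showing $\phi(x)=2(1+x)^{2H}-2^{2H}(1+x^{2H})+(2^{2H}-2)(1-x)^{2H}\geq 0$ for $H>\tfrac12$ (and $\leq 0$ for $H<\tfrac12$) on $[0,1)$, verify only the endpoint data $\phi(0)=\phi(1)=\phi'(1)=0$, $\phi'(0)>0$, and then explicitly defer the decisive step --- that $\Psi(x)=2(1+x)^{2H-2}-2^{2H}x^{2H-2}+(2^{2H}-2)(1-x)^{2H-2}$ changes sign exactly once --- as an ``expected obstacle.'' That step is the entire content of part (3) beyond part (2) (it is essentially the Bojdecki--Gorostiza--Talarczyk increment bound for sub-fractional Brownian motion, extended by Zili), so as written the proposal is an incomplete proof, not a proof. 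The paper avoids the issue altogether by quoting Lemma~3 of \cite{Zili} and simply adding $a^2(t-s)$. Your gap is fillable along the lines you sketch: multiply $\Psi$ by $x^{2-2H}>0$ to get $2\bigl(\tfrac{x}{1+x}\bigr)^{2-2H}-2^{2H}+(2^{2H}-2)\bigl(\tfrac{x}{1-x}\bigr)^{2-2H}$, which is strictly increasing on $(0,1)$ (both base functions increase and $2-2H>0$) and runs from $-2^{2H}$ to $+\infty$, so it vanishes exactly once; this yields the unimodality of $\phi'$ and hence $\phi\geq 0$, with the mirror argument for $H<\tfrac12$. Until you supply such an argument (or cite the known lemma), part (3) is not established.
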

\begin{proof}
\begin{enumerate}

\item It is a direct   consequence  of   (\ref{C2}).

\item Let $0\leq s<t$ and $\alpha(t, s)=E\left( M_t^H(a,b,c)-M_s^H(a,b,c))^2\right)$. Then
	\begin{eqnarray*}
	\alpha(t, s)
		&=&E \left( M_t^H(a,b,c)\right) ^2\ 	+E\left( M_s^H(a,b,c)\right) ^2
-2E\left( M_t^H(a,b,c)M_s^H(a,b,c)
  \right) \\&=&a^2t+b^2t^{2H}+2bct^{2H}-2^{2H}bct^{2H}+c^2t^{2H}
+a^2s+b^2s^{2H}+2bcs^{2H}-2^{2H}bcs^{2H}\\&+&c^2s^{2H}-2a^2(t\wedge s)-b^2t^{2H}-b^2s^{2H}+b^2|t-s|^{2H}-bct^{2H}-bcs^{2H}+bc|t+s|^{2H}\\&-&cbt^{2H}-cbs^{2H}+cb|t+s|^{2H}-c^2t^{2H}-c^2s^{2H}+c^2|t-s|^{2H}\\
&=&a^2(t+s)-2^{2H}bc(t^{2H}+s^{2H})-2a^2(t\wedge s)+(b^2+c^2)|t-s|^{2H}+2bc|t+s|^{2H}    \\
&=&  a^2|t-s|-2^{2H}bc(t^{2H}+s^{2H})+(b^2+c^2)|t-s|^{2H}+2bc|t+s|^{2H}.
\end{eqnarray*}

\item It is a direct consequence of the second item of    Proposition \ref{p1} and  Lemma 3  in \cite{Zili}.
\end{enumerate}
 \end{proof}	

\begin{proposition}\label{pr3}
 For all $(a, b, c)\in\mathbb{R}^{3}\setminus\{(0, 0,0)\}$ and $H\in(0,1)\setminus\{\frac{1}{2}\}$  the mgfBm is not a self-similar process.
 \end{proposition}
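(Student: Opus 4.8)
The plan is to reduce the whole question to a statement about the covariance function, using that $M^{H}(a,b,c)$ is centered Gaussian. Recall that a process is self-similar of index $\beta>0$ if for every $\lambda>0$ the rescaled process $\{M_{\lambda t}^{H}(a,b,c),\,t\ge 0\}$ and $\{\lambda^{\beta}M_{t}^{H}(a,b,c),\,t\ge 0\}$ have the same finite-dimensional distributions. Since both processes are centered Gaussian, their laws coincide if and only if their covariances agree, i.e. if and only if $C(\lambda t,\lambda s)=\lambda^{2\beta}C(t,s)$ for all $s,t\ge 0$ and all $\lambda>0$. Specializing to $s=t$, a necessary condition for self-similarity of some index $\beta$ is that the variance function $v(t):=E\left(M_{t}^{H}(a,b,c)\right)^{2}$ satisfy $v(\lambda t)=\lambda^{2\beta}v(t)$. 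So it suffices to show that $v$ cannot be exactly homogeneous of any single degree.

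Next I would substitute the explicit variance computed in the first item of Proposition \ref{p1},
\[
v(t)=a^{2}t+K\,t^{2H},\qquad K:=b^{2}+c^{2}-(2^{2H}-2)\,bc .
\]
Assuming self-similarity of index $\beta$ and putting $t=1$ forces the functional identity $a^{2}\lambda+K\lambda^{2H}=(a^{2}+K)\lambda^{2\beta}$ to hold for every $\lambda>0$. The heart of the argument is then a comparison of homogeneity degrees: the left-hand side mixes the two power functions $\lambda^{1}$ and $\lambda^{2H}$, whose exponents are \emph{distinct} precisely because $H\ne\tfrac12$. The cleanest way to reach a contradiction is asymptotic — letting $\lambda\to\infty$ forces $2\beta$ to equal the larger of $1$ and $2H$, whereas letting $\lambda\to 0^{+}$ forces $2\beta$ to equal the smaller; since $1\ne 2H$, these two demands are incompatible. (Equivalently, one divides by $\lambda^{2\beta}$, differentiates in $\lambda$, and invokes the linear independence of $\lambda^{-2\beta}$ and $\lambda^{2H-2\beta-1}$.)

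The one point demanding genuine care is the coefficient $K$, which must be shown not to vanish so that the fractional power is actually present. Writing $K=b^{2}+c^{2}+(2-2^{2H})\,bc$ as a quadratic form in $(b,c)$, its symmetric matrix has trace $2$ and determinant $1-\tfrac14(2-2^{2H})^{2}$, and this determinant is strictly positive for every $H\in(0,1)$ because then $2^{2H}\in(1,4)$; hence the form is positive definite and $K>0$ whenever $(b,c)\ne(0,0)$. I expect this positivity check, together with the bookkeeping of the boundary configurations, to be the main obstacle. Indeed the exponent-comparison above yields the failure of self-similarity exactly when $v$ genuinely carries two distinct powers of $t$, that is when $a\ne 0$ and $(b,c)\ne(0,0)$, both coefficients $a^{2}$ and $K$ being nonzero; if instead one of these coefficients drops out then $v$ collapses to a single monomial ($a^{2}t$ or $K\,t^{2H}$) and the incompatibility of exponents disappears, so those degenerate cases have to be isolated and examined separately.
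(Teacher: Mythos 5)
Your proof is correct where the statement itself is true, and it takes a genuinely different route from the paper. The paper fixes the candidate index to be $H$ in advance: it computes $C(ht,hs)$ and compares it with $h^{2H}C(t,s)$, observes that the two expressions differ in the single term $a^{2}h(t\wedge s)$ versus $a^{2}h^{2H}(t\wedge s)$, and stops there. That argument only excludes self-similarity \emph{of index} $H$; it says nothing about other exponents (for instance, when $b=c=0$ and $a\neq 0$ the process is Brownian motion, hence self-similar of index $\tfrac12$, even though the paper's two covariances still disagree). Your argument is stronger and cleaner: by restricting to the diagonal you reduce everything to the variance $v(t)=a^{2}t+Kt^{2H}$, and the incompatibility of the exponents forced at $\lambda\to\infty$ and $\lambda\to 0^{+}$ rules out \emph{every} index $\beta$ at once. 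Your positivity check on $K=b^{2}+c^{2}+(2-2^{2H})bc$ (trace $2$, determinant $1-\tfrac14(2-2^{2H})^{2}>0$ since $2^{2H}\in(1,4)$) is exactly the ingredient the paper never supplies, and it is needed to guarantee that the $t^{2H}$ term really is present. Finally, your closing caveat about the degenerate configurations is not a gap but a genuine correction to the statement: when $a=0$ the identity $C(ht,hs)=h^{2H}C(t,s)$ holds exactly, so $M^{H}(0,b,c)$ \emph{is} self-similar — as the paper itself concedes in the remark immediately following the proposition — and likewise $M^{H}(a,0,0)$ is self-similar of index $\tfrac12$. So the proposition as printed is too strong; the correct hypothesis, which your two-exponent argument isolates precisely, is $a\neq 0$ \emph{and} $(b,c)\neq(0,0)$.
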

\begin{proof}
This follows from the fact that, for fixed $h>0$, the processes $\left\lbrace M_{ht}^H(a,b,c),\ t\geq0\right\rbrace $
and  $\left\lbrace h^HM_t^H(a,b, c),\ t\geq0\right\rbrace $ are Gaussian, centered, but don't have the same covariance function. Indeed
\begin{eqnarray*}
 C\left( ht, hs\right)
&=&
a^2(ht\wedge hs)+\frac{b^2}{2}\left((ht)^{2H}+(hs)^{2H}-|ht-hs|^{2H} \right)\\&+&\frac{bc}{2}\left( (ht)^{2H}+(hs)^{2H}-|ht+hs|^{2H} \right)
\\&+&\frac{cb}{2}\left( (ht)^{2H}+(hs)^{2H}-|-(ht+hs)|^{2H} \right)\\&+&\frac{c^2}{2}\left( (ht)^{2H}+(hs)^{2H}-|-(ht-hs)|^{2H} \right)\\
&=& a^2(ht\wedge hs)+\frac{b^2}{2}(ht)^{2H}+\frac{b^2}{2}(hs)^{2H}-\frac{b^2}{2}|ht-hs|^{2H}\\&+&\frac{bc}{2}(ht)^{2H}+\frac{bc}{2}(hs)^{2H}-
\frac{bc}{2}|ht+hs|^{2H}+\frac{bc}{2}(ht)^{2H}+\frac{bc}{2}(hs)^{2H}\\&-&\frac{bc}{2}|ht+hs|^{2H}+\frac{c^2}{2}(ht)^{2H}+\frac{c^2}{2}(hs)^{2H}-
\frac{c^2}{2}|ht-hs|^{2H}\\
&=& a^2h(t\wedge s)+h^{2H}\frac{(b+c)^2}{2}\left((t)^{2H}+(s)^{2H}\right)-bch^{2H}|t+s|^{2H}-h^{2H}\frac{(b^2+c^2)}{2}|t-s|^{2H}.
\end{eqnarray*}
On the other hand,
\begin{eqnarray*}
Cov\left( h^HM_t^{H}(a,b, c),h^HM_s^{H}(a,b, c)\right)
&=&h^{2H}Cov\left( M_t^{H}(a,b, c),M_s^{H}(a,b, c)\right)\\
&=& a^2h^{2H}(t\wedge s)+h^{2H}\frac{(b+c)^2}{2}\left((t)^{2H}+(s)^{2H}\right)-bch^{2H}|t+s|^{2H}\\&-&h^{2H}\frac{(b^2+c^2)}{2}|t-s|^{2H}.
\end{eqnarray*}
Then the mgfBm is not a self-similar process for all $(a, b, c)\in\mathbb{R}^{3}\setminus\{(0, 0,0)\}.$
\end{proof}

\begin{remarks}
		As a consequence of Proposition \ref{pr3}, we see that:
	\begin{enumerate}	
  \item $M^{H}(0, b, c)$ is a self-similar process for all $( b, c)\in \mathbb{R}^{2}.$
\item $M^{\frac{1}{2}}(a, b, c)$ is a self-similar process for all $( a, b, c)\in \mathbb{R}^{3}.$
\end{enumerate}
	\end{remarks}
Now, we will study the Markovian property.

\begin{theorem}
	Assume  $H\in(0,1)\setminus \left\lbrace {\frac{1}{2}}\right\rbrace , a\in \mathbb{R} \ and \ (b,c) \in \mathbb{R}^2\setminus\left\lbrace (0,0)\right\rbrace.$
Then $M^H(a,b,c) $  is not a Markovian process.
	\end{theorem}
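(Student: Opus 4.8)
The plan is to combine the classical covariance characterization of Gaussian Markov processes with a singularity of the covariance (\ref{C2}) that is present exactly when $H\neq\tfrac12$. Recall that a centered Gaussian process with covariance $C$ is Markovian if and only if
\begin{eqnarray*}
C(s,u)\,C(t,t)=C(s,t)\,C(t,u),\qquad 0\le s\le t\le u,
\end{eqnarray*}
holds whenever $C(t,t)>0$. Before using this, I would first record that $C(t,t)>0$ for every $t>0$. By item (1) of Proposition \ref{p1} we have $C(t,t)=a^2t+\bigl(b^2+c^2-(2^{2H}-2)bc\bigr)t^{2H}$; since $2^{2H}\in(1,4)$ gives $|2^{2H}-2|<2$ on $(0,1)$, the inequality $|(2^{2H}-2)bc|<b^2+c^2$ shows the coefficient of $t^{2H}$ is strictly positive for every $(b,c)\neq(0,0)$, so the process is non-degenerate and the characterization applies.

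I would then argue by contradiction. Assume $M^H(a,b,c)$ is Markovian and fix $0<t<u$. The characterization gives $C(s,u)=\lambda\,C(s,t)$ for all $s\in(0,t)$, where $\lambda=C(t,u)/C(t,t)$ does not depend on $s$. Each $s$-dependent term of (\ref{C2}) is smooth on $(0,t)$, so I may differentiate this identity twice in $s$, which annihilates the linear contribution $a^2 s$ and yields $\partial_s^2 C(s,u)=\lambda\,\partial_s^2 C(s,t)$. The crux is that
\begin{eqnarray*}
\partial_s^2 C(s,t)=2H(2H-1)\Bigl[\tfrac{(b+c)^2}{2}\,s^{2H-2}-bc\,(s+t)^{2H-2}-\tfrac{b^2+c^2}{2}\,(t-s)^{2H-2}\Bigr],
\end{eqnarray*}
and as $s\to t^-$ the term $(t-s)^{2H-2}$ blows up because $2H-2<0$, while its coefficient $-\tfrac{b^2+c^2}{2}\,2H(2H-1)$ is nonzero: indeed $b^2+c^2>0$ since $(b,c)\neq(0,0)$, and $2H(2H-1)\neq0$ precisely because $H\in(0,1)\setminus\{\tfrac12\}$. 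Hence $\partial_s^2 C(s,t)\to\pm\infty$ as $s\to t^-$. By contrast, every $s$-dependent term of $C(s,u)$ stays regular near $s=t$ (its only potentially singular factor $(u-s)^{2H-2}$ tends to the finite value $(u-t)^{2H-2}$), so $\partial_s^2 C(s,u)$ remains bounded as $s\to t^-$.

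This is the contradiction: the identity $\partial_s^2 C(s,u)=\lambda\,\partial_s^2 C(s,t)$ cannot survive the limit $s\to t^-$ unless $\lambda=0$, whereas choosing $u$ close enough to $t$ makes $C(t,u)>0$ by continuity (recall $C(t,u)\to C(t,t)>0$ as $u\to t^+$), so $\lambda>0$. I expect the main obstacle to be the bookkeeping that isolates $(t-s)^{2H-2}$ as the unique unbounded contribution to $\partial_s^2 C(s,t)$ and confirms that its coefficient never vanishes on the stated parameter range; this is exactly the step where the exclusions $H=\tfrac12$ and $(b,c)=(0,0)$ become unavoidable, in agreement with the hypotheses. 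Note that the argument never constrains $a$, which is consistent with the theorem holding for all $a\in\mathbb{R}$.
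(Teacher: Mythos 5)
Your proposal is correct, and it proves the theorem by a genuinely different route from the paper. Both arguments start from the same Revuz--Yor necessary condition $C(s,u)\,C(t,t)=C(s,t)\,C(t,u)$ for centered Gaussian Markov processes, but the paper then specializes to the triple $s=\sqrt{t}$, $u=t^{2}$ and performs a Taylor expansion as $t\to\infty$, splitting into four cases according to the sign of $H-\tfrac12$ and whether $b+c=0$ or $b-c=0$ (and deferring the case $a=0$ to \cite{Zili}). You instead fix $0<t<u$, read the factorization as the identity $C(s,u)=\lambda\,C(s,t)$ in the variable $s$ on $(0,t)$, differentiate twice, and play the blow-up of $-\tfrac{b^{2}+c^{2}}{2}\,2H(2H-1)(t-s)^{2H-2}$ as $s\to t^{-}$ against the boundedness of $\partial_s^2 C(s,u)$ there. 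I checked the details: the second derivative you display is right; the only unbounded term near $s=t$ is indeed the $(t-s)^{2H-2}$ one (the $s^{2H-2}$ term is harmless since $t>0$); its coefficient vanishes exactly when $H=\tfrac12$ or $(b,c)=(0,0)$; and your verification that $C(t,t)>0$ (via $|2^{2H}-2|<2$, so $|(2^{2H}-2)bc|<b^{2}+c^{2}$ for $(b,c)\neq(0,0)$) is needed to form $\lambda$ and is correct, as is the choice of $u$ near $t$ to force $\lambda>0$. Your local-singularity argument buys a uniform treatment of all parameter cases, handles $a=0$ and $a\neq 0$ simultaneously, and makes visible exactly where each hypothesis is used; the paper's asymptotic computation is more elementary in that it needs no differentiation of the covariance, at the cost of a lengthy four-case expansion.
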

	\begin{proof}
	The process $M^{H}(a, b, c)$ is a centered Gaussian.
Then, if $M_t^H(a, b, c)$ is a Markovian process, according to Revuz and Yor \cite{Revu}, for all $s<t<u$, we would have
\begin{eqnarray*}
	C(s,u) C(t,t)=C(s,t) C(t,u).
\end{eqnarray*}
We will only prove the theorem in the case where $a\neq 0$, the result with $a= 0$ is known in \cite{Zili}.  For the proof we follow the proof of Proposition 1 given in  \cite{Zili}.\\
Using Proposition \ref{p1} we get
\begin{eqnarray*}
C(s,u)&=&a^2 s+\frac{(b+c)^2}{2}(u^{2H}+s^{2H})-bc|u+s|^{2H}-\frac{(b^2+c^2)}{2}|u-s|^{2H},\\
C(t,t)&=&a^2t+\left(b^2+c^2-(2^{2H}-2)bc \right) t^{2H},\\
C(s,t)&=&a^2s+\frac{(b+c)^2}{2}(t^{2H}+s^{2H})-bc|t+s|^{2H}-\frac{(b^2+c^2)}{2}|t-s|^{2H},\\
C(t,u)&=&a^2 t+\frac{(b+c)^2}{2}(u^{2H}+t^{2H})-bc|u+t|^{2H}-\frac{(b^2+c^2)}{2}|u-t|^{2H}.
\end{eqnarray*}
In the particular case where $1<s=\sqrt{t}<t<u=t^2$, we have
\begin{eqnarray*}
	C(\sqrt{t},t^2)&=&a^2 t^{\frac{1}{2}}+\frac{(b+c)^2}{2}(t^{4H}+t^{H})-bc|t^2+t^{\frac{1}{2}}|^{2H}-\frac{(b^2+c^2)}{2}|t^2-t^{\frac{1}{2}}|^{2H},\\
	C(t,t)&=&a^2t+\left(b^2+c^2-(2^{2H}-2)bc \right) t^{2H},\\
	C(\sqrt{t},t)&=&a^2 t^{\frac{1}{2}}+\frac{(b+c)^2}{2}(t^{2H}+t^{H})-bc|t+t^{\frac{1}{2}}|^{2H}-\frac{(b^2+c^2)}{2}|t-t^{\frac{1}{2}}|^{2H},\\
	C({t},t^2)&=&a^2 t+\frac{(b+c)^2}{2}(t^{4H}+t^{2H})-bc|t^2+t|^{2H}-\frac{(b^2+c^2)}{2}|t^2-t^{\frac{1}{2}}|^{2H}.
\end{eqnarray*}
Then by using that,
\begin{eqnarray*}
	C(\sqrt{t},t^2) C(t,t)=C(\sqrt{t},t) C(t,t^2),
\end{eqnarray*}
we have
\begin{eqnarray*}
& &\left[a^2 t^{\frac{1}{2}}+\frac{(b+c)^2}{2}(t^{4H}+t^{H})-bc|t^2+t^{\frac{1}{2}}|^{2H}-\frac{(b^2+c^2)}{2}|t^2-t^{\frac{1}{2}}|^{2H} \right]\\
&\times&\left[ a^2t+\left(b^2+c^2-(2^{2H}-2)bc \right) t^{2H}\right] \\
&=&\left[ a^2 t^{\frac{1}{2}}+\frac{(b+c)^2}{2}(t^{2H}+t^{H})-bc|t+t^{\frac{1}{2}}|^{2H}-\frac{(b^2+c^2)}{2}|t-t^{\frac{1}{2}}|^{2H}\right]  \\&\times&\left[ a^2 t+\frac{(b+c)^2}{2}(t^{4H}+t^{2H})-bc|t^2+t|^{2H}-\frac{(b^2+c^2)}{2}|t^2-t^{\frac{1}{2}}|^{2H}\right] .	
	\end{eqnarray*}
It follows that
	\begin{eqnarray*}
		& &\left[a^2 t^{\frac{1}{2}}+t^{4H}\left( \frac{(b+c)^2}{2}(1+t^{-3H})-bc|1+t^{-\frac{3}{2}}|^{2H}-\frac{(b^2+c^2)}{2}|1-t^{-\frac{3}{2}}|^{2H} \right) \right]\\
		&\times&\left[ a^2t+\left(b^2+c^2-(2^{2H}-2)bc \right) t^{2H}\right] \\
		&=&\left[ a^2 t^{\frac{1}{2}}+t^{2H}\left( \frac{(b+c)^2}{2}(1+t^{-H})-bc|1+t^{-\frac{1}{2}}|^{2H}-\frac{(b^2+c^2)}{2}|1-t^{-\frac{1}{2}}|^{2H}\right) \right]  \\&\times&\left[ a^2 t+t^{4H}\left( \frac{(b+c)^2}{2}(1+t^{-2H})-bc|1+t^{-1}|^{2H}-\frac{(b^2+c^2)}{2}|1-t^{-1}|^{2H}\right) \right].
	\end{eqnarray*}
Hence						
	\begin{eqnarray*}
a^4 t^{\frac{3}{2}}&+&a^2\left(b^2+c^2-(2^{2H}-2)bc \right)t^{2H+\frac{1}{2}}\\
&+&a^2\left( \frac{(b+c)^2}{2}(1+t^{-3H})-bc|1+t^{-\frac{3}{2}}|^{2H}-\frac{(b^2+c^2)}{2}|1-t^{-\frac{3}{2}}|^{2H} \right)t^{4H+1}\\
&+&\left( \frac{(b+c)^2}{2}(1+t^{-3H})-bc|1+t^{-\frac{3}{2}}|^{2H}-\frac{(b^2+c^2)}{2}|1-t^{-\frac{3}{2}}|^{2H} \right)\\
&\times&\left(b^2+c^2-(2^{2H}-2)bc \right)t^{6H}\\
&=&a^4t^{\frac{3}{2}}\\&+&a^2\left( \frac{(b+c)^2}{2}(1+t^{-2H})-bc|1+t^{-1}|^{2H}-\frac{(b^2+c^2)}{2}|1-t^{-1}|^{2H}\right)t^{4H+\frac{1}{2}}\\
&+&a^2\left( \frac{(b+c)^2}{2}(1+t^{-H})-bc|1+t^{-\frac{3}{2}}|^{2H}-\frac{(b^2+c^2)}{2}|1-t^{-\frac{3}{2}}|^{2H}\right)t^{2H+1}\\
&+&\left( \frac{(b+c)^2}{2}(1+t^{-H})-bc|1+t^{-\frac{3}{2}}|^{2H}-\frac{(b^2+c^2)}{2}|1-t^{-\frac{3}{2}}|^{2H}\right)\\
&\times&\left( \frac{(b+c)^2}{2}(1+t^{-2H})-bc|1+t^{-1}|^{2H}-\frac{(b^2+c^2)}{2}|1-t^{-1}|^{2H}\right)t^{6H}
	\end{eqnarray*}
Take $t^{6H}$ as a common factor, we get
	\begin{eqnarray*}
&&a^2\left(b^2+c^2-(2^{2H}-2)bc \right)t^{-4H+\frac{1}{2}}\\
	&+&a^2\left( \frac{(b+c)^2}{2}(1+t^{-3H})-bc|1+t^{-\frac{3}{2}}|^{2H}-\frac{(b^2+c^2)}{2}|1-t^{-\frac{3}{2}}|^{2H} \right)t^{-2H+1}\\
	&+&\left( \frac{(b+c)^2}{2}(1+t^{-3H})-bc|1+t^{-\frac{3}{2}}|^{2H}-\frac{(b^2+c^2)}{2}|1-t^{-\frac{3}{2}}|^{2H} \right)\\
	&\times&\left(b^2+c^2-(2^{2H}-2)bc \right)\\
	&=&a^2\left( \frac{(b+c)^2}{2}(1+t^{-2H})-bc|1+t^{-1}|^{2H}-\frac{(b^2+c^2)}{2}|1-t^{-1}|^{2H}\right)t^{-2H+\frac{1}{2}}\\
	&+&a^2\left( \frac{(b+c)^2}{2}(1+t^{-H})-bc|1+t^{-\frac{1}{2}}|^{2H}-\frac{(b^2+c^2)}{2}|1-t^{-\frac{1}{2}}|^{2H}\right)t^{-4H+1}\\
	&+&\left( \frac{(b+c)^2}{2}(1+t^{-H})-bc|1+t^{-\frac{1}{2}}|^{2H}-\frac{(b^2+c^2)}{2}|1-t^{-\frac{1}{2}}|^{2H}\right)\\
	&\times&\left( \frac{(b+c)^2}{2}(1+t^{-2H})-bc|1+t^{-1}|^{2H}-\frac{(b^2+c^2)}{2}|1-t^{-1}|^{2H}\right).
\end{eqnarray*}

Therefore
\begin{eqnarray*}
	&&a^2\left(b^2+c^2-(2^{2H}-2)bc \right)t^{-4H+\frac{1}{2}}\\
&+&a^2[ \frac{1}{2}(b+c)^2\left( 1+t^{-3H}\right) -bc\left(1+2Ht^{-\frac{3}{2}}+H(2H-1)t^{-3}+\circ(t^{-3}) \right)\\
	&-&\frac{1}{2}(b^2+c^2)\left(1-2Ht^{-\frac{3}{2}}+H(2H-1)t^{-3}+\circ(t^{-3}) \right) ] t^{-2H+1}    \\
	&+&[ \frac{1}{2}(b+c)^2\left( 1+t^{-3H}\right) -bc\left(1+2Ht^{-\frac{3}{2}}+H(2H-1)t^{-3}+\circ(t^{-3}) \right)\\
	&-&\frac{1}{2}(b^2+c^2)\left(1-2Ht^{-\frac{3}{2}}+H(2H-1)t^{-3}+\circ(t^{-3}) \right) ] \\
	&\times &[b^2+c^2-(2^{2H}-2)bc]\\
	&=&  a^2[\frac{1}{2}(b+c)^2\left( 1+t^{-2H}\right) -bc\left(1+2Ht^{-1}+H(2H-1)t^{-2}+\circ(t^{-2}) \right)\\
	&-&\frac{1}{2}(b^2+c^2)\left(1-2Ht^{-1}+H(2H-1)t^{-2}+\circ(t^{-2}) \right)   ]t^{-2H+\frac{1}{2}}\\
	&+&a^2[\frac{1}{2}(b+c)^2\left( 1+t^{-H}\right) -bc\left(1+2Ht^{-1/2}+H(2H-1)t^{-1}+\circ(t^{-1}) \right)\\
	&-&\frac{1}{2}(b^2+c^2)\left(1-2Ht^{-\frac{1}{2}}+H(2H-1)t^{-1}+\circ(t^{-1})\right)  ]t^{-4H+1}\\
	&+&[\frac{1}{2}(b+c)^2\left( 1+t^{-H}\right) -bc\left(1+2Ht^{-\frac{1}{2}}+H(2H-1)t^{-1}+\circ(t^{-1}) \right)\\
	&-&\frac{1}{2}(b^2+c^2)\left(1-2Ht^{-\frac{1}{2}}+H(2H-1)t^{-1}+\circ(t^{-1})\right)  ]\\
	&\times&[\frac{1}{2}(b+c)^2\left( 1+t^{-2H}\right) -bc\left(1+2Ht^{-1}+H(2H-1)t^{-2}+\circ(t^{-2}) \right)\\
	&-&\frac{1}{2}(b^2+c^2)\left(1-2Ht^{-1}+H(2H-1)t^{-2}+\circ(t^{-2}) \right)   ].
	\end{eqnarray*}
	First case: $0<H<\frac{1}{2}$, $a\neq0$ and $b+c\neq0$. By Taylor's expansion we get, as $t\rightarrow\infty$,
\begin{eqnarray*}
&&a^2\left(b^2+c^2-(2^{2H}-2)bc \right)t^{-4H+\frac{1}{2}}
+a^2\frac{1}{2}(b+c)^2t^{-5H+1}\\
&&+	\frac{1}{2}(b+c)^2[b^2+c^2-(2^{2H}-2)bc]t^{-3H}\\
&&\approx a^2\frac{1}{2}(b+c)^2t^{-4H+\frac{1}{2}}
+a^2\frac{1}{2}(b+c)^2t^{-5H+1}
+ \frac{1}{4}(b+c)^4t^{-3H}.
\end{eqnarray*}
Therefore
\begin{eqnarray*}
	&&a^2\left(b^2+c^2-(2^{2H}-2)bc \right)t^{-4H+\frac{1}{2}}
	+\frac{1}{2}(b+c)^2[b^2+c^2-(2^{2H}-2)bc]t^{-3H}\\
	&&\approx a^2\frac{1}{2}(b+c)^2t^{-4H+\frac{1}{2}}
	+ \frac{1}{4}(b+c)^4t^{-3H},
\end{eqnarray*}
which is true if and only if
\begin{eqnarray*}
	\frac{(b-c)^2}{2}-(2^{2H}-2)bc=0\quad \mbox{and} \quad a^2\frac{(b-c)^2}{2}-a^2(2^{2H}-2)bc=0.
\end{eqnarray*}
However, it is easy to check that $	\frac{(b-c)^2}{2}-(2^{2H}-2)bc>0$ and  $	a^2\frac{(b-c)^2}{2}-a^2(2^{2H}-2)bc>0$ for fixed $c, b$ and every real $a$.\\
	Second case: $0<H<\frac{1}{2}$, $a\neq0$ and $b+c=0$. By Taylor's expansion we get, as $t\rightarrow\infty$,
	\begin{eqnarray*}
		&&a^2[b^2+c^2-(2^{2H}-2)bc]t^{-4H+\frac{1}{2}}+a^2\left[-2Hbc+(b^2+c^2)H \right]t^{-2H-\frac{1}{2}}\\&&+\left[-2Hbc+(b^2+c^2)H \right]\times\left[b^2+c^2-(2^{2H}-2)bc\right]t^{-\frac{3}{2}}\\&&\approx  a^2\left[-2Hbc+(b^2+c^2)H \right]t^{-2H-\frac{1}{2}}+a^2\left[-2Hbc+(b^2+c^2)H \right]t^{-4H+\frac{1}{2}}\\&&+\left[-2Hbc+(b^2+c^2)H \right]^2t^{-\frac{3}{2}}.
	\end{eqnarray*}
	Hence
	\begin{eqnarray*}
		&&a^2[b^2+c^2-(2^{2H}-2)bc]t^{-4H+\frac{1}{2}}+\left[-2Hbc+(b^2+c^2)H \right]\times\left[b^2+c^2-(2^{2H}-2)bc\right]t^{-\frac{3}{2}}\\&&\approx a^2\left[-2Hbc+(b^2+c^2)H \right]t^{-4H+\frac{1}{2}}+\left[-2Hbc+(b^2+c^2)H \right]^2t^{-\frac{3}{2}} ,
			\end{eqnarray*}
	which is true if and only if $b=c=0$. This is a contradiction.\\
	Third case: $\frac{1}{2}<H<1$, $a\neq0$ and $ b-c\neq0$. By Taylor's expansion we get, as $t\rightarrow\infty$,
	\begin{eqnarray*}
		 &&a^2[b^2+c^2-(2^{2H}-2)bc]t^{-4H+\frac{1}{2}}+a^2H(b-c)^2t^{-2H-\frac{1}{2}}\\&&+H(b-c)^2[b^2+c^2-(2^{2H}-2)bc]t^{-\frac{3}{2}}\\&&\approx a^2H(b-c)^2t^{-2H-\frac{1}{2}}+
a^2H(b-c)^2t^{-4H+\frac{1}{2}}+ H^2(b-c)^4t^{-\frac{3}{2}}.
	\end{eqnarray*}
	Then
	\begin{eqnarray*}
		&&a^2[b^2+c^2-(2^{2H}-2)bc]t^{-4H+\frac{1}{2}}+H(b-c)^2[b^2+c^2-(2^{2H}-2)bc]t^{-\frac{3}{2}}\\&&\approx a^2H(b-c)^2t^{-4H+\frac{1}{2}}+ H^2(b-c)^4t^{-\frac{3}{2}},
			\end{eqnarray*}
	which is true if and only if
	\begin{eqnarray*}
		\left[b^2(1-H)+c^2(1-H)+(2-2^{2H}+2H)bc\right]=0.
	\end{eqnarray*}
	However, it is easy to check that $	b^2(1-H)+c^2(1-H)+(2-2^{2H}+2H)bc>0$ for fixed $c, b$ and every real $a$.\\	
	Fourth case: $\frac{1}{2}<H<1$, $a\neq0$  and $b-c=0$. By Taylor's expansion we get, as $t\rightarrow\infty$,
\begin{eqnarray*}
	&&a^2\left(b^2+c^2-(2^{2H}-2)bc \right)t^{-4H+\frac{1}{2}}
	+	\frac{1}{2}(b+c)^2[b^2+c^2-(2^{2H}-2)bc]t^{-3H}\\
	&&\approx  a^2\frac{1}{2}(b+c)^2t^{-4H+\frac{1}{2}}
	+\frac{1}{4}(b+c)^4t^{-3H},
\end{eqnarray*}
which is true if and only if $2-2^{2H}=0.$ This contradicts the fact that  $H\neq\frac{1}{2}$.
\end{proof}

Let us check the mixed-self-semilarity property of the mgfBm. This property was introduced in \cite{ZiliMM} for  the mfBm and investigated to show the H\"{o}lder continuity of the mfBm. See  also \cite{Coco}   for the sfBm case.
	\begin{proposition}
	For any 	$h>0$, $ \left\lbrace M^H_{ht}(a,b,c)\right\rbrace  \stackrel{\bigtriangleup}{=} \left\lbrace M^H_{t}(ah^{\frac{1}{2}}, bh^{H}, ch^{H})\right\rbrace.$\\
where $\stackrel{\bigtriangleup}{=}$ "to have the same law".
 \end{proposition}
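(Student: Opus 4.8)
The plan is to exploit the fact that both processes in the statement are centred Gaussian processes. For a centred Gaussian process the law is completely determined by its covariance function, so proving the claimed equality in law reduces to checking that the two processes have the same covariance function. Concretely, writing $K(t,s)$ for the covariance of $\{M^H_{ht}(a,b,c)\}$ and $\widetilde K(t,s)$ for the covariance of $\{M^H_{t}(ah^{1/2},bh^{H},ch^{H})\}$, it suffices to verify $K(t,s)=\widetilde K(t,s)$ for all $s,t\geq 0$. That both families are centred Gaussian is immediate from the representation (\ref{MH}), since each is a fixed linear combination of the Gaussian processes $B$ and $B^{H}$.

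First I would compute $K(t,s)=\mathrm{Cov}(M^H_{ht}(a,b,c),M^H_{hs}(a,b,c))=C(ht,hs)$. This computation has in fact already been carried out in the proof of Proposition \ref{pr3}, where it was shown that
\begin{eqnarray*}
C(ht,hs)=a^{2}h(t\wedge s)+h^{2H}\frac{(b+c)^2}{2}\left(t^{2H}+s^{2H}\right)-bc\,h^{2H}|t+s|^{2H}-h^{2H}\frac{(b^2+c^2)}{2}|t-s|^{2H}.
\end{eqnarray*}
So I would simply invoke that identity rather than redo the expansion.

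Next I would compute $\widetilde K(t,s)$ directly from the covariance formula (\ref{C2}), substituting $a\mapsto ah^{1/2}$, $b\mapsto bh^{H}$ and $c\mapsto ch^{H}$. The key algebraic observations are that $(ah^{1/2})^2=a^{2}h$, that $(bh^{H}+ch^{H})^2=h^{2H}(b+c)^2$, that $(bh^{H})(ch^{H})=bc\,h^{2H}$, and that $(bh^{H})^2+(ch^{H})^2=h^{2H}(b^2+c^2)$. Feeding these into (\ref{C2}) yields exactly the same four-term expression obtained for $C(ht,hs)$ above (using $|t+s|=t+s$ since $s,t\geq 0$), whence $\widetilde K(t,s)=K(t,s)$ and the two Gaussian processes share the same finite-dimensional distributions.

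There is no genuine obstacle here; the proof is pure bookkeeping. The only conceptual point worth highlighting is the matching of scaling exponents: the Brownian part carries self-similarity index $\tfrac12$ while each fractional part carries index $H$, and it is precisely the choices $ah^{1/2}$ and $bh^{H},ch^{H}$ that reproduce the factors $h$ and $h^{2H}$ generated by dilating time by $h$. This is also the reason the process fails to be self-similar (Proposition \ref{pr3}) yet satisfies this mixed self-similarity relation: the two distinct scaling exponents cannot be absorbed into a single factor $h^{H}$, but they can be absorbed into a coordinated rescaling of the parameters $a,b,c$.
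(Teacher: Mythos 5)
Your proposal is correct and follows essentially the same route as the paper: reduce the equality in law of two centred Gaussian processes to equality of covariance functions, then verify that $C(ht,hs)$ and the covariance of $M^H_t(ah^{1/2},bh^H,ch^H)$ coincide by matching the factors $h$ and $h^{2H}$. Reusing the expansion of $C(ht,hs)$ already carried out in Proposition \ref{pr3} is a harmless shortcut; no gap.
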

\begin{proof}
	For fixed $h>0$, the processes $\{M_{ht}^{H}(a,b,c)\}$ and $\{M_{t}^{H}(ah^{\frac{1}{2}},bh^H,ch^H)\}$ are Gaussian and centered. Therefore, one only have to prove that they have the same covariance function.
	But, for any $s$ and $t$ in  $\mathbb{R}_+$, since $B$ and $B^H$ are independent, then
\begin{eqnarray*}
 Cov\left( M_{ht}^{H}(a,b,c),M_{hs}^{H}(a,b,c)\right)
&=&a^2h(t\wedge s)+\frac{(b^2+c^2)}{2}\left[h^{2H}\left(t^{2H}+s^{2H}-|t-s|^{2H}\right)\right]\\
&+&bc\left[h^{2H}\left(t^{2H}+s^{2H}-|t+s|^{2H}\right)\right]\\
&=&Cov\left( M_{t}^{H}(ah^{\frac{1}{2}},bh^H,ch^H),M_{s}^{H}(ah^{\frac{1}{2}},bh^H,ch^H)  \right).
\end{eqnarray*}

\end{proof}
\begin{proposition} \label{pr2}
	For all $(a, b, c)\in \mathbb{R}^{3}\setminus\{(0, 0, 0)\},$ the increments of the $M^H(a, b, c) $ are not stationary.
	\end{proposition}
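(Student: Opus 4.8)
The plan is to use that $M^H(a,b,c)$ is a centred Gaussian process with $M_0^H(a,b,c)=0$, so its law is completely determined by its covariance, and hence by the variogram $(s,t)\mapsto E\big(M_t^H(a,b,c)-M_s^H(a,b,c)\big)^2$. For such a process the increments are stationary if and only if this variogram is a function of the lag $t-s$ alone. It therefore suffices to produce one fixed increment length $h>0$ for which the map $s\mapsto E\big(M_{s+h}^H(a,b,c)-M_s^H(a,b,c)\big)^2$ fails to be constant.

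First I would feed the substitution $t=s+h$ into the second item of Proposition \ref{p1}, obtaining, for $s\ge 0$ and $h>0$,
\begin{eqnarray*}
E\big(M_{s+h}^H(a,b,c)-M_s^H(a,b,c)\big)^2 &=& a^2 h+(b^2+c^2)h^{2H}\\
&& {}+2bc(2s+h)^{2H}-2^{2H}bc\big((s+h)^{2H}+s^{2H}\big).
\end{eqnarray*}
The first two terms depend only on $h$, so stationarity of the increments would force the remainder
\begin{eqnarray*}
R(s):=2bc(2s+h)^{2H}-2^{2H}bc\big((s+h)^{2H}+s^{2H}\big)
\end{eqnarray*}
to be constant in $s$ for every fixed $h>0$. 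The entire problem thus reduces to showing that $R$ is non-constant.

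To do this I would compare $R$ at two values of $s$. At $s=0$ one reads off immediately $R(0)=bc\,(2-2^{2H})\,h^{2H}$. For the second value I would send $s\to\infty$ with $h$ fixed and Taylor-expand, exactly in the spirit of the proof of the Markov property above: factoring out $2^{2H-1}s^{2H}$ and expanding $(1+h/(2s))^{2H}$ and $(1+h/s)^{2H}$, the constant and $s^{-1}$ contributions cancel, leaving
\begin{eqnarray*}
R(s)=-\frac{3}{4}\,2^{2H}\,bc\,H(2H-1)\,h^2\,s^{2H-2}+o\big(s^{2H-2}\big),\qquad s\to\infty,
\end{eqnarray*}
which tends to $0$ because $H<1$. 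Since $R(0)\neq 0=\lim_{s\to\infty}R(s)$ whenever $bc\neq 0$ and $H\neq\frac12$, the function $R$ is non-constant, the variogram genuinely depends on $s$, and the increments cannot be stationary.

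The main obstacle is the degenerate parameter set on which this comparison collapses, namely $bc=0$ or $H=\frac12$. There every $s$-dependent term in the variogram cancels identically, $R\equiv 0$, and the $s=0$ versus $s\to\infty$ test gives no information; a genuinely separate argument is then needed to reach the full range $(a,b,c)\in\mathbb{R}^3\setminus\{(0,0,0)\}$ asserted in the statement. I would treat these boundary configurations by hand, inspecting the covariance $Cov\big(M_{t+h}^H-M_t^H,\,M_{s+h}^H-M_s^H\big)$ of two non-overlapping increments rather than the single-increment variance, since that finer quantity can in principle detect a shift dependence even when the individual variances are lag-functions. This is the delicate point on which the claimed generality of the proposition ultimately rests.
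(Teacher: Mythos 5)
Your computation is essentially the one the paper makes: both arguments reduce to the observation that the variance of an increment of fixed length $h$, namely
$a^2h+(b^2+c^2)h^{2H}+2bc(2s+h)^{2H}-2^{2H}bc\bigl((s+h)^{2H}+s^{2H}\bigr)$,
depends on the starting point $s$ unless the $bc$-terms cancel. The paper compares a general starting point $s$ with $s=0$ (i.e.\ with the variance of $M_h^H$ itself); you compare $s=0$ with $s\to\infty$. Either comparison succeeds exactly when $bc\neq 0$ and $H\neq\frac{1}{2}$. (A small slip: the leading term of your expansion should be $-2^{2H-1}H(2H-1)bc\,h^{2}s^{2H-2}$ rather than $-\frac{3}{4}\,2^{2H}H(2H-1)bc\,h^{2}s^{2H-2}$, but only the fact that it tends to $0$ is used, so this is harmless.)

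The degenerate set you flag at the end is the real issue, and it is not a gap that any further idea can close: on that set the proposition as stated is simply false. If $c=0$ (or $b=0$), then $M^H(a,b,0)=aB_t+bB_t^H$ is a sum of two independent processes each with stationary increments, hence itself has stationary increments --- and the paper's own Remark~(2) immediately following the proposition asserts exactly this, contradicting the proposition's claim for all $(a,b,c)\neq(0,0,0)$. Likewise, for $H=\frac{1}{2}$ the covariance \eqref{C2} collapses to $(a^2+b^2+c^2)(t\wedge s)$, a rescaled Brownian motion. So your proposed rescue via covariances of non-overlapping increments cannot work there; the correct statement needs the additional hypotheses $bc\neq 0$ and $H\neq\frac{1}{2}$, and under those hypotheses your proof is complete. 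Note that the paper's proof shares the defect you were honest about: it invokes ``the inequality of covariance functions'' without verifying that the two expressions actually differ, which they do not when $bc=0$ or $H=\frac{1}{2}$.
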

	\begin{proof}
		Let $(a, b, c)\in \mathbb{R}^{3}\setminus\{(0, 0,0)\}.$
	For a fixed $t\geq0$ consider the processes  $\{P_{t},\, t\geq0\}$ define by $P_t=M_{t+s}^H(a, b, c)-M_s^H(a, b, c).$
	Using Proposition \ref{p1} we get
	\begin{eqnarray*}
		Cov(P_t,P_t)]
		&=&E\left[(M_{t+s}^H(a, b, c)-M_s^H(a, b, c))^2 \right]\\
		&=&a^2(t+s+s)-2^{2H}bc(({t+s})^{2H}+s^{2H})-2a^2 s\\&+&(b^2+c^2)|t+s-s|^{2H}+2bc|t+s+s|^{2H} \\
		&=& a^2(t+2s)-2^{2H}bc(({t+s})^{2H}+s^{2H})-2a^2 s+(b^2+c^2)t^{2H}+2bc|t+2s|^{2H} .
	\end{eqnarray*}
	Using Proposition \ref{p1} we get
	\begin{eqnarray*}
		Cov(M_t^H(a, b, c), M_t^H(a, b, c))=a^2t+\left(b^2+c^2-(2^{2H}-2)bc \right)t^{2H}. 	
	\end{eqnarray*}
	Since  both processes are centered Gaussian, the inequality of covariance functions implies that $P_t$ does not have the same distribution as $M_t^H(a, b, c)$. Thus, the incremental behavior of $M^H(a, b, c)$ at any point in the future is not the same. Hence  the  increments of $M^H(a, b, c)$ are not stationary.
\end{proof}
\begin{remarks}
As a consequence of Proposition \ref{pr2}, we see that:
	\begin{enumerate}	
		\item the increments of $M^{H}(0, b, c)$   are not stationary for all $( b, c)\in \mathbb{R}^{2}\setminus\{(0, 0)\}.$
		\item  the increments of $M^{H}(a, b, 0)$  are stationary for all $( a, b)\in \mathbb{R}^{2}.$
	\end{enumerate}
	\end{remarks}
\begin{proposition}
	\begin{enumerate}	
		\item
Let $H\in (0, 1).$ The mgfBm admits a version whose sample paths are almost H\"{o}lder continuous of order strictly less than $\frac{1}{2}\wedge H.$
		\item When $b$ or $c$ not zero  and $H\in(0, 1)\setminus\{ \frac{1}{2}\}$   the mgfBm is not a semi-martingale.
\end{enumerate}	
	
\end{proposition}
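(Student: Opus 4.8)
The plan is to establish the two assertions by separate mechanisms: Kolmogorov's continuity criterion for the path regularity, and an analysis of the quadratic variation for the failure of the semimartingale property.

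For the first item, I would feed the upper bound of the third item of Proposition \ref{p1} into Kolmogorov's continuity theorem. Fixing a compact interval $[0,T]$, for $0\le s<t\le T$ one has
\begin{equation*}
E\big(M_t^H(a,b,c)-M_s^H(a,b,c)\big)^2\le a^2(t-s)+\nu_{(b,c,H)}(t-s)^{2H}\le K_T\,(t-s)^{2H\wedge 1},
\end{equation*}
where on a bounded interval the smaller exponent $2H\wedge 1$ dominates and $K_T$ depends only on $a,b,c,H,T$. Since the increment is centred Gaussian, its even moments are powers of its variance: $E|M_t^H-M_s^H|^{2k}=c_k\big(E(M_t^H-M_s^H)^2\big)^k\le c_kK_T^k(t-s)^{k(2H\wedge 1)}$. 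Taking $k$ large enough that $k(2H\wedge 1)>1$ and applying the Kolmogorov criterion produces a modification that is $\gamma$-H\"older on $[0,T]$ for every $\gamma<\tfrac12(2H\wedge 1)-\tfrac{1}{2k}$; letting $k\to\infty$ yields every order $\gamma<\tfrac12\wedge H$, which is the claim.

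For the second item, the plan is to examine the quadratic variation of $M^H(a,b,c)$ along dyadic partitions $t_i=iT/2^n$ of $[0,T]$. By the second item of Proposition \ref{p1} the expected squared increment splits as
\begin{equation*}
E\big(M_{t_{i+1}}^H-M_{t_i}^H\big)^2=a^2(t_{i+1}-t_i)+(b^2+c^2)(t_{i+1}-t_i)^{2H}+R_i,
\end{equation*}
with $R_i=2bc|t_{i+1}+t_i|^{2H}-2^{2H}bc(t_{i+1}^{2H}+t_i^{2H})$. A Taylor expansion shows that the leading and first-order terms of $R_i$ cancel, so $R_i=O((t_{i+1}-t_i)^2)$ away from the origin and $\sum_iR_i\to 0$; the Brownian part sums to $a^2T$, and the decisive term is $(b^2+c^2)\sum_i(t_{i+1}-t_i)^{2H}=(b^2+c^2)T^{2H}2^{n(1-2H)}$. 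When $0<H<\tfrac12$ and $(b,c)\neq(0,0)$ this diverges as $n\to\infty$, so (using the Gaussianity of the increments to pass from the mean to convergence in probability) the quadratic variation is infinite; as every semimartingale has finite quadratic variation on compacts, $M^H(a,b,c)$ is not a semimartingale. The case $a=0$ for any $H\neq\tfrac12$ is exactly the gfBm and is settled in \cite{Zili}.

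The hard part is the regime $\tfrac12<H<1$ with $a\neq0$, where the fractional contribution to the quadratic variation \emph{vanishes} and the latter reduces to $a^2T$, the same value as for the Brownian part $aB$, so the quadratic variation test is inconclusive. To close this case I would argue that if $M^H(a,b,c)$ were a necessarily continuous semimartingale with quadratic variation $a^2t$, then the remainder $Y_t=bB_t^H+cB_{-t}^H=M_t^H(0,b,c)$ would be a continuous process of zero energy, hence of finite variation, contradicting the infinite variation of the gfBm for $H\neq\tfrac12$ established in \cite{Zili}. Making this subtraction legitimate is the genuine obstacle: it requires handling the joint filtration of $B$ and $B^H$ and ruling out the delicate cancellation, already present in Cheridito's study of the mixed fractional Brownian motion \cite{Cherid}, by which an independent Brownian motion can restore the semimartingale property. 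I would therefore isolate this finite-variation reduction, justified through the independence of $B$ and $B^H$ and a direct comparison of $p$-variations of the two components, as the technical core of the argument.
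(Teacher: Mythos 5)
Your item (1) is correct and is in fact more careful than the paper's own proof: the paper writes the moment bound only for $\alpha=2$, for which the Kolmogorov criterion yields a H\"older exponent $(\tfrac12\wedge H)-\tfrac12\le 0$ and hence nothing; your passage to moments of order $2k$ via Gaussianity and the limit $k\to\infty$ is exactly the completion that is needed to reach every order below $\tfrac12\wedge H$. Your treatment of the case $0<H<\tfrac12$ in item (2) --- divergence of the dyadic quadratic sums, with a Gaussian second-moment argument to upgrade divergence of the mean to divergence in probability --- is also sound, and is essentially what the paper delegates to Corollary 2.1 of \cite{BGT}.

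The genuine gap is the one you flagged yourself: the case $a\neq0$, $\tfrac12<H<1$. Two distinct problems. First, the reduction you sketch rests on a false implication: a continuous process with vanishing quadratic variation need not have finite variation (fBm with $H>\tfrac12$ is the standard counterexample), so ``zero energy, hence of finite variation'' does not close the argument; nor can you subtract $aB$ from a hypothetical decomposition $M=N+A$ without first knowing that $B$ is a local martingale in the filtration in which $M$ is assumed to be a semimartingale. Second, and more seriously, your worry about Cheridito's cancellation phenomenon is exactly on target: by \cite{Cherid}, for $c=0$, $ab\neq0$ and $H\in(\tfrac34,1)$ the process $aB_t+bB_t^H$ \emph{is} a semimartingale, so item (2) as stated fails in that regime and no argument can close your gap there; at best one can hope to treat $H\in(\tfrac12,\tfrac34]$ by the Gaussian-semimartingale machinery of \cite{Cherid}. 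For comparison, the paper's own proof of this case is also invalid: it replaces the true upper bound $a^2(t-s)+\nu_{(b,c,H)}(t-s)^{2H}$ by $(a^2+\nu_{(b,c,H)})(t-s)^{2H}$, which is false for small $t-s$ when $a\neq0$ and $2H>1$, so the hypotheses of the cited Lemma 2.1 of \cite{BGT} are never verified. In short, your proposal is complete and correct where the statement is unproblematic ($a=0$, or $H<\tfrac12$), and the remaining case is not a technicality you postponed but the point where the claim itself breaks down.
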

\begin{proof}
	\begin{enumerate}	
		\item
Let $s$ and $t$ in  $\mathbb{R}_+$ and $ \alpha=2.$ The proof follows by Kolmogorov criterion from Lemma 3  in \cite{Zili} and using  Proposition \ref{p1} we get
\begin{eqnarray*}
		E\left( |M_t^H(a,b,c)-M_s^H(a,b,c)|^ \alpha\right) &=&a^2|t-s|-2^{2H}bc(t^{2H}+s^{2H})\\&+&(b^2+c^2)|t-s|^{2H}+2bc|t+s|^{2H}\\&\leq& C_{ \alpha} |t-s|^{ \alpha(\frac{1}{2}\wedge H)}\
	\end{eqnarray*}
where  $C_{ \alpha } =\left( a^2 +\nu(b,c, H)\right)$ and $\nu_{(b,c, H)}$  is given in Lemma 3  in \cite{Zili}.

	\item Suppose first that  $H<\frac{1}{2}$. We get from Proposition  \ref{p1}
\begin{eqnarray*}
E \left(M_t^H(a,b,c)-M_s^H(a,b,c)\right)^2 \geq \gamma_{(b,c, H)}(t-s)^{2H}.
\end{eqnarray*}
Since $2H<1$ and $\gamma_{(b,c, H)}>0$ then the assumption of Corollary 2.1 in \cite{BGT} is satisfied, and consequently  the mgfBm is not a semi-martingale.\\
Suppose now that  $H>\frac{1}{2}$. We get from Properties  \ref{p1}
\begin{eqnarray*}
a^{2}(t-s)+ \gamma_{(b,c, H)}(t-s)^{2H}\leq E \left(M_t^H(a,b,c)-M_s^H(a,b,c)\right)^2 \leq (a^{2}+\nu_{(b,c, H)})(t-s)^{1\wedge2H}
\end{eqnarray*}
then
\begin{eqnarray*}
 \gamma_{(b,c, H)}(t-s)^{2H}\leq E \left(M_t^H(a,b,c)-M_s^H(a,b,c)\right)^2 \leq (a^{2}+\nu_{(b,c, H)})(t-s)^{2H}.
\end{eqnarray*}
Since $1<2H<2$  and $\nu_{(b,c, H)}>0$ then the assumption of Lemma 2.1 in \cite{BGT} is satisfied, and consequently  the mgfBm is not a semi-martingale.
\end{enumerate}	
\end{proof}


\section{ Long range dependence of the mgfBm }

 \begin{definition}
	We say that the increments of a stochastic process $X$ are long-range dependent if for every integer $p\geq1$, we have
	\begin{eqnarray*}
		\sum_{n\geq 1}R_X(p,p+n)=\infty,
	\end{eqnarray*}
	where $R_X(p,p+n)=E\left((X_{p+1}-X_p)(X_{p+n+1}-X_{p+n}) \right). $
\end{definition}
This property was investigated in many papers (e.g. \cite{AlexY, Casa, ChenXuHu, BardBa, GaoH}).
\begin{theorem}
	For every $a\in  \mathbb{R} $ and $(b,c)\in \mathbb{R}^2 \setminus\left\lbrace (0,0)\right\rbrace $ the increments of $M^H(a,b,c)$ are long-range dependent if and only if $H>\frac{1}{2}$ and $b\neq c$ .
	\end{theorem}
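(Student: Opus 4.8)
The plan is to read off the autocovariance of the increments directly from the covariance function (\ref{C2}). Writing $R(p,p+n):=R_{M^H}(p,p+n)=E[(M^H_{p+1}-M^H_p)(M^H_{p+n+1}-M^H_{p+n})]$, bilinearity gives the mixed second difference
\[
R(p,p+n)=C(p+1,p+n+1)-C(p+1,p+n)-C(p,p+n+1)+C(p,p+n).
\]
First I would discard the inert pieces of $C$: for $n\ge 1$ the intervals $[p,p+1]$ and $[p+n,p+n+1]$ do not overlap, so on them $t\wedge s=t$ and the second difference of the Brownian term $a^2(t\wedge s)$ vanishes (reflecting the independence of the increments of $B$); similarly $\frac{(b+c)^2}{2}(t^{2H}+s^{2H})$ contributes nothing since each summand depends on a single variable. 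What remains involves only the cross term and the $|t-s|^{2H}$ term, and both collapse onto the single discrete second-difference profile $\delta(x):=(x+1)^{2H}-2x^{2H}+(x-1)^{2H}$, yielding
\[
R(p,p+n)=-bc\,\delta(2p+n+1)+\tfrac{b^2+c^2}{2}\,\delta(n).
\]

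Next I would insert the asymptotics $\delta(x)=2H(2H-1)x^{2H-2}+O(x^{2H-4})$ as $x\to\infty$, obtained by Taylor-expanding $(x\pm1)^{2H}$ (the odd powers cancel). Since $2p+n+1\sim n$, both $\delta(n)$ and $\delta(2p+n+1)$ are asymptotic to $2H(2H-1)n^{2H-2}$, so the leading contributions combine to
\[
R(p,p+n)\sim 2H(2H-1)\Big(\tfrac{b^2+c^2}{2}-bc\Big)n^{2H-2}=H(2H-1)(b-c)^2\,n^{2H-2}.
\]
The ``if'' direction is then immediate: for $H>\frac12$ and $b\ne c$ the constant $H(2H-1)(b-c)^2$ is strictly positive, so $R(p,p+n)$ is eventually positive of exact order $n^{2H-2}$ with $-1<2H-2<0$, and hence $\sum_{n\ge1}R(p,p+n)=+\infty$ for every $p$.

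For the converse I would eliminate the complementary regimes. When $H<\frac12$ the leading exponent is $2H-2<-1$, so the series converges absolutely; when $H=\frac12$ one has $\delta\equiv0$ and every term is zero. The genuinely delicate case, which I expect to be the main obstacle, is $b=c$: here the coefficient $(b-c)^2$ vanishes and the leading terms of $\delta(n)$ and $\delta(2p+n+1)$ cancel, so the order of $R(p,p+n)=b^2(\delta(n)-\delta(2p+n+1))$ must be found one level deeper. Applying the mean value theorem to $x\mapsto x^{2H-2}$ on $[n,2p+n+1]$ gives $\delta(n)-\delta(2p+n+1)=O(n^{2H-3})$, and since $2H-3<-1$ for all $H\in(0,1)$ the series converges. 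Thus outside the regime $H>\frac12,\ b\ne c$ long-range dependence fails, completing the characterization. Throughout, the only points requiring care are keeping the remainder estimates uniform in $n$ (the parameter $p$ merely shifts the argument and leaves all exponents unchanged) and tracking the \emph{sign} of the leading constant, since it is this sign---not just its magnitude---that separates divergence to $+\infty$ from absolute convergence.
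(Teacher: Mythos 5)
Your proposal is correct and follows essentially the same route as the paper: both compute $R(p,p+n)$ as the mixed second difference of the covariance, reduce it to second differences of $x\mapsto x^{2H}$ at $n$ and $2p+n+1$, Taylor-expand to get the leading term $H(2H-1)(b-c)^2n^{2H-2}$, and then handle the degenerate case $b=c$ by going one order deeper to $O(n^{2H-3})$. Your presentation via the single profile $\delta(x)=(x+1)^{2H}-2x^{2H}+(x-1)^{2H}$ and the explicit observation that the $a^2(t\wedge s)$ and single-variable terms cancel is a cleaner packaging of the same computation.
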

	\begin{proof}
		For all $n\in \mathbb{N}\setminus\{0\}$ and $p\geq 1$ we have
		\begin{eqnarray*}
			R_{M}(p,p+n)&=&E\left((M_{p+1}^H(a, b, c)-M_p^H(a, b, c))(M_{p+n+1}^H(a, b, c)-M_{p+n}^H(a, b, c)) \right)\\
			&=&E\left(M_{p+1}^H(a, b, c)M_{p+n+1}^H(a, b, c)\right)-E\left(M_{p+1}^H(a, b, c)M_{p+n}^H(a, b, c) \right)\\
			&-&E\left(M_{p}^H(a, b, c)M_{p+n+1}^H(a, b, c)\right)+E\left(M_{p}^H(a, b, c)M_{p+n}^H(a, b, c) \right)\\
			&=&C({p+1},{p+n+1})-C({p+1},{p+n})-C({p},{p+n+1})+C({p},{p+n})\\
			&=& a^2(p+1) +\frac{(b+c)^2}{2}\left( (p+1)^{2H}+(p+n+1)^{2H}\right) -bc(2p+n+2)^{2H}\\&-&\frac{(b^2+c^2)}{2}n^{2H}
			-a^2(p+1)-\frac{(b+c)^2}{2}\left( (p+1)^{2H}+(p+n)^{2H}\right) \\&+&bc(2p+n+1)^{2H}+\frac{(b^2+c^2)}{2}|n-1|^{2H}
			-a^2p\\&-&\frac{(b+c)^2}{2}\left( (p)^{2H}+(p+n+1)^{2H}\right) +bc(2p+n+1)^{2H}+\frac{(b^2+c^2)}{2}|n+1|^{2H}.
\end{eqnarray*}
Hence
\begin{eqnarray*}
			R_{M}(p,p+n)&=&
			\frac{(b^2+c^2)}{2}\left( (n+1)^{2H}-2n^{H}+(n-1)^{2H}\right)\\&-&bc\left((2p+n+2)^{2H}-2(2p+n+1)^{2H}+(2p+n)^{2H} \right).
		\end{eqnarray*}
		Then for every integer $p\geq1$, by Taylor's expansion, as $n\rightarrow\infty$, we have
		\begin{eqnarray*}
			R_M(p,p+n)&=&\frac{b^2+c^2}{2}n^{2H}\left[ \left( 1+\frac{1}{n}\right)^{2H}-2+\left( 1-\frac{1}{n}\right)^{2H} \right]
			\\&-&bcn^{2H}\left[ \left( 1+\frac{2p+2}{n}\right)^{2H}-2 \left( 1+\frac{2p+1}{n}\right)+\left( 1+\frac{2p}{n}\right)^{2H} \right] \\
			&=&H(2H-1)n^{2H-2}(b-c)^2\\&-&4H(2H-1)(H-1)bc(2p+1)n^{2H-3}(1+\circ(1)).
		\end{eqnarray*}
		If $b\neq c$, we see that as $n\rightarrow\infty$,
		\begin{eqnarray*}
			R_M(p,p+n)\approx H(2H-1)n^{2H-2}(b-c)^2.
		\end{eqnarray*}
		Then
		\begin{eqnarray*}
			\sum_{n\geq 1}R_M(p,p+n)=\infty \ \ \Leftrightarrow \ 2H-2>-1 \ \ \Leftrightarrow H>\frac{1}{2}.
		\end{eqnarray*}
		If $b=c$, then, as $n\rightarrow\infty$,
		\begin{eqnarray*}
			R_M(p,p+n)\approx 4H(2H-1)(H-1)a^2(2p+1)n^{2H-3}.
		\end{eqnarray*}
		For every $H\in(0,1) $, we have $2H-3<-1$ and, consequently,
		\begin{eqnarray*}
			\sum_{n\geq 1}R_M(p,p+n)<\infty.
		\end{eqnarray*}

\end{proof}
\begin{remarks}

	\begin{enumerate}	
		\item  For all $a \in \mathbb{R}$ and $ b\in \mathbb{R}\setminus\{0\}$ the increments of $M^H(a,b,0)$ are long-range dependent  if and only if $H>\frac{1}{2}.$
	\item If $b=c=\frac{1}{\sqrt{2}}$ the increments of $M^H(0,\frac{1}{\sqrt{2}},\frac{1}{\sqrt{2}})$ are short-range dependent if and only if $H\in(0, 1).$ But if $b\neq c$
the increments of $M^H(0,b,c)$ are long-range dependent  if and only if $H>\frac{1}{2}.$

       \item From  \cite{Bojd},  the increments of $M^{H}(0, \frac{1}{\sqrt{2}}, \frac{1}{\sqrt{2}})$ on intervals $[u,u+r],[u+r,u+2r]$ are more weakly correlated than those of $M^{H}(0, 1, 0)$.
       \item From  \cite{ZiliM}, If $H>\frac{1}{2}$, $b^2+c^2=1$ and $bc\geq0$, the increments of $M^{H}(0, b, c)$ are more weakly correlated than those of $M^{H}(0, 1, 0)$, but more strongly correlated than those of $M^{H}(0, \frac{1}{\sqrt{2}}, \frac{1}{\sqrt{2}})$.
       \item From  \cite{ZiliM}, If $H\geq\frac{1}{2}$, $(bc\leq0 \ and \ (b-c)^2\leq1)$ or $(bc\geq0 \ and \ b^2+c^2\leq1)$, the increments of $M^{H}(0, b, c)$ are more strongly correlated than those of both $M^{H}(0, 1, 0)$ and $M^{H}(0, \frac{1}{\sqrt{2}}, \frac{1}{\sqrt{2}})$.

   \end{enumerate}
\end{remarks}


\newpage


\end{document}